\newtheorem{theorem}{Theorem}[section]
\newtheorem{lemma}[theorem]{Lemma}
\newtheorem{corollary}[theorem]{Corollary}
\theoremstyle{definition}
\theoremstyle{remark}
\numberwithin{equation}{section}
\DeclareMathOperator{\m}{m}
\DeclareMathAlphabet{\matheur}{U}{eur}{m}{n}
\mathchardef\pFcomma=\mathcode`, 
\newcommand*\pFq[5]{%
  \begingroup
  \begingroup\lccode`~=`,
    \lowercase{\endgroup\def~}{\pFcomma\mkern\pFqskip}%
  \mathcode`,=\string"8000
  {}_{#1}F_{#2}\biggl(\genfrac{}{}{0pt}{}{#3}{#4};#5\biggr)%
  \endgroup
}
\renewcommand{\d}{{\mathrm d}}
\renewcommand{\Re}{\operatorname{Re}}
\begin{document}

\title[The Mahler measure of a Calabi-Yau threefold]{The Mahler measure of a Calabi-Yau threefold \\ and special $L$-values}

\author{Matthew A. Papanikolas}
\address{Department of Mathematics, Texas A{\&}M University, College
Station,
TX 77843, USA} \email{map@math.tamu.edu}

\author{Mathew D. Rogers}
\address{Department of Mathematics and Statistics, Universit\'{e} de Montr\'{e}al, CP 6128 succ. Centre-ville, Montr\'{e}al Qu\'{e}bec H3C 3J7, Canada} \email{mathewrogers@gmail.com}

\author{Detchat Samart}
\address{Department of Mathematics, Texas A{\&}M University, College
Station,
TX 77843, USA} \email{detchats@math.tamu.edu}

\thanks{The first and the third authors were partially supported by NSF Grant DMS-1200577}

\subjclass[2010]{Primary: 11F67 Secondary: 11R06, 33C20, 33C75}

\date{May 9, 2013}

\begin{abstract}
The aim of this paper is to prove a Mahler measure formula of a four-variable Laurent polynomial whose zero locus defines a Calabi-Yau threefold. We show that its Mahler measure is a rational linear combination of a special $L$-value of the normalized newform in $S_4(\Gamma_0(8))$ and a Riemann zeta value. This is equivalent to a new formula for a $_6F_5$-hypergeometric series evaluated at $1$.
\end{abstract}

\keywords{Mahler measure, Hypergeometric series, Elliptic integrals, Modular form, Calabi-Yau threefold}

\maketitle
\section{Introduction}\label{Sec:intro}
For a nonzero $n$-variable Laurent polynomial $P$ with complex coefficients, the Mahler measure of $P$ is defined by $$\m(P)=\int_0^1\cdots \int_0^1 \log |P(e^{2\pi i \theta_1},\ldots,e^{2\pi i \theta_n})|\,d\theta_1\cdots d\theta_n.$$
If $P$ is a monic polynomial in one variable, then it follows by Jensen's formula that
$$\m(P)=\sum_{j=1}^n \max\{0,\log{|\alpha_j|}\},$$
where $\alpha_j$'s are the roots of $P$, and thus if $P$ is the irreducible polynomial of an algebraic number $\alpha$ over $\mathbb{Q}$, then $\m(P)$ is the logarithmic Weil height of $\alpha$.

When $P$ has more than one variable, however, there is no general closed form for $\m(P)$, and its explicit value is usually difficult to compute. On the other hand, in some particular cases when $P$ is a two-variable polynomial with rational coefficients whose zero set define a genus-one curve $C$, it turns out that $\m(P)$ is a rational multiple of $L'(E,0)$, where $E$ is the elliptic curve arising from $C$. The first known example of polynomials having this property is the family
\begin{equation*}
P_k:=x+x^{-1}+y+y^{-1}-k,
\end{equation*}
where $k\in\mathbb{Z}\backslash\{0,\pm4\}.$
This phenomenon was first observed by Deninger \cite{Deninger} and has been studied extensively by Boyd, Rodriguez Villegas, and many others \cite{Boyd,LR,RV,RZ}. Note that, unlike the other values of $k\in\mathbb{N}$, $P_4=0$ define a curve of genus $0$, and it was shown in \cite{RV} that $\m(P_4)=4G/\pi$, where $G=L(\chi_{-4},2)$, also known as Catalan's constant. Applying the definition of the Mahler measure directly, it is easy to show that
\begin{equation*}
\m((x+x^{-1})(y+y^{-1})-k)= \m(P_k)
\end{equation*}
for every $k$, so we have
\begin{equation}\label{E:2var}
\m((x+x^{-1})(y+y^{-1})-4)=\frac{4G}{\pi}=2L'(\chi_{-4},-1).
\end{equation}
As a higher dimensional analogue of the family $P_k$, Bertin and others \cite{BertinI,BertinIII,BertinII,BFFLM} studied the three-variable polynomials
$$Q_k:=x+x^{-1}+y+y^{-1}+z+z^{-1}-k,$$
whose zero loci define $K3$ surfaces $X_k$ over $\mathbb{Q}$. They proved that, for certain values of $k$ defining singular $K3$ surfaces, their Mahler measures are of the form
\begin{equation*}
\m(Q_k)=c_1L'(g,0)+c_2L'(\chi,-1),
\end{equation*}
where $c_1,c_2\in\mathbb{Q}$, $g$ is the weight $3$ newform associated with $X_k$, and $\chi$ is a quadratic character.  Afterwards,  the second and third authors \cite{Rogers,Samart2,Samart1} established similar results for other families of three-variable polynomials, including the formula
\begin{equation}\label{E:3var}
\m((x+x^{-1})(y+y^{-1})(z+z^{-1})-8)=4L'(h,0),
\end{equation}
where $h(\tau)=\eta^6(4\tau)$, and $\eta(\tau)$ is the Dedekind eta function. Therefore, it is natural to consider whether a four-variable analogue of \eqref{E:2var} and \eqref{E:3var}
can be expressed in terms of some special $L$-values.  Namely if we let
\begin{equation}
R_k := (x+x^{-1})(y+y^{-1})(z+z^{-1})(w+w^{-1})-k,
\end{equation}
then the main purpose of this paper is to prove the following result about $\m(R_{16})$.
\begin{theorem}\label{T:main}
The following identity is true:
\begin{equation*}
\m(R_{16}) = \m((x+x^{-1})(y+y^{-1})(z+z^{-1})(w+w^{-1})-16)=8L'(f,0)-28\zeta'(-2),
\end{equation*}
where $f(\tau)=\eta^4(2\tau)\eta^4(4\tau)$ is the unique normalized newform in $S_4(\Gamma_0(8))$, and $\zeta(s)$ denotes the Riemann zeta function.
\end{theorem}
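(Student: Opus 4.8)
The plan is to collapse the four-dimensional torus integral into a single hypergeometric constant and then evaluate that constant by relating the period of the underlying Calabi--Yau family to the newform $f$.

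\emph{Step 1: reduction to a series.} On $\TT^4$ each factor $x+x^{-1}=2\cos 2\pi\theta$ is real, so for $k>16$ I would write $\log|P-k|=\log k+\Re\log(1-P/k)$ with $P=(x+x^{-1})(y+y^{-1})(z+z^{-1})(w+w^{-1})$ and integrate term by term. The moments $\int_0^1(2\cos 2\pi\theta)^{2m}\,d\theta=\binom{2m}{m}$ together with the vanishing of odd moments and $\binom{2m}{m}=4^m(1/2)_m/m!$ give
\[
\m(R_k)=\log k-\sum_{m=1}^\infty\frac{1}{2m\,k^{2m}}\binom{2m}{m}^4,\qquad \frac{1}{16^{2m}}\binom{2m}{m}^4=\left(\frac{(1/2)_m}{m!}\right)^{4}.
\]
Since $\binom{2m}{m}^4/16^{2m}\sim(\pi m)^{-2}$, the series still converges at the boundary value $k=16$, and because the singular locus $P=16$ has measure zero, dominated convergence gives $\m(R_{16})=\lim_{k\to 16^+}\m(R_k)$, hence
\[
\m(R_{16})=4\log 2-\frac12\sum_{m=1}^\infty\frac{((1/2)_m)^4}{m\,(m!)^4}.
\]

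\emph{Step 2: hypergeometric reformulation.} Writing $m=n+1$ and using $(1/2)_{n+1}=\tfrac12(3/2)_n$, $(n+1)!=(2)_n$, and $1/(n+1)=(1)_n/(2)_n$, I would convert the tail into a ${}_6F_5$ at $1$,
\[
\sum_{m=1}^\infty\frac{((1/2)_m)^4}{m\,(m!)^4}=\frac{1}{16}\,{}_6F_5\!\left(3/2,3/2,3/2,3/2,1,1;\,2,2,2,2,2;\,1\right),
\]
so that Theorem~\ref{T:main} becomes exactly the evaluation of this ${}_6F_5$, the reformulation promised in the abstract. One checks that the series is not well poised, so the classical summation theorems of Dougall and Whipple do not apply and genuinely new input is required. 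In particular the final answer carries no $\log 2$, so the evaluation must itself produce a $+4\log 2$ cancelling the term above.

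\emph{Step 3: from the period to the $L$-value.} The generating function $F(t)={}_4F_3(1/2,1/2,1/2,1/2;1,1,1;256\,t)=\sum_m\binom{2m}{m}^4 t^m$ is the holomorphic period of the one-parameter Calabi--Yau threefold family, and the constant above is $\int_0^{1/256}\bigl(F(t)-1\bigr)\,\frac{dt}{t}$ up to the factor of Step 1. Concretely $F$ can be written through complete elliptic integrals, and I would evaluate the resulting one-dimensional integral by passing to the modular variable $\tau$ via the ratio of periods: under the mirror map $t=t(\tau)$ the logarithmic period integral pulls back to an Eichler-type integral of $f(\tau)$ along the imaginary axis. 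Its main term $\int_0^\infty f(it)\,t^{3}\,dt=(2\pi)^{-4}\Gamma(4)L(f,4)$ produces $L(f,4)/\pi^4$, the normalization of the mirror map at the cusp produces the $\log 2$, and the regularization of the integrand near the cusp produces a multiple of $\sum_m m^{-3}=\zeta(3)$. Collecting these expresses $\m(R_{16})$ as a rational combination of $L(f,4)/\pi^4$ and $\zeta(3)/\pi^2$.

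\emph{Step 4 and the main obstacle.} To reach the stated normalization I would use the functional equations. For the weight-$4$ level-$8$ newform, $\Lambda(f,s)=8^{s/2}(2\pi)^{-s}\Gamma(s)L(f,s)$ satisfies $\Lambda(f,s)=\varepsilon\,\Lambda(f,4-s)$; since $L(f,0)=0$ and $\Gamma$ has a simple pole at $0$, this yields $L'(f,0)=\varepsilon\,\tfrac{24}{\pi^4}L(f,4)$, while $\zeta'(-2)=-\zeta(3)/(4\pi^2)$ handles the zeta term, turning $L(f,4)/\pi^4$ and $\zeta(3)/\pi^2$ into $L'(f,0)$ and $\zeta'(-2)$. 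The hard part is Step 3: making the mirror parametrization $t=t(\tau)$ explicit enough to identify the period $F(t(\tau))$ with $f$, and rigorously controlling the cusp contribution so that the $\zeta(3)$-term emerges with exactly the coefficient forcing $-28\zeta'(-2)$ and the $\log 2$ cancels. Pinning down the sign $\varepsilon=+1$ and verifying that all rational constants assemble into the clean pair $(8,-28)$ is where the arithmetic of the level-$8$ form enters.
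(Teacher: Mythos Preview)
Your Steps 1, 2, and 4 are correct and coincide with the paper's own reduction: the paper also arrives at $\m(R_{16})=4\log 2-\sum_{m\ge 1}\frac{1}{2m}\binom{2m}{m}^4 2^{-8m}$ and converts between $L(f,4)/\pi^4$, $\zeta(3)/\pi^2$ and $L'(f,0)$, $\zeta'(-2)$ via the functional equations.

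The gap is Step 3. You have not evaluated anything there; you have described a strategy (pull the period integral back through a mirror map and recognize an Eichler integral of $f$) and then, in Step 4, candidly labeled it ``the hard part.'' That is exactly where the whole theorem lives. As written there is no computation identifying the pulled-back integrand with $f$, no control of the cusp contribution, and no mechanism producing the specific coefficients. The paper itself flags why this is delicate: $f=\eta^4(2\tau)\eta^4(4\tau)$ is a non-CM newform, so the usual CM-style modular parametrizations that make such Eichler-integral arguments go through cleanly are not available, and ``new techniques'' are needed.

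The paper's route in place of your Step 3 is entirely different and worth knowing. First, by a Mellin-transform trick on $\eta^4(2iu)\eta^4(4iu)$ and Ramanujan's $q\psi^4(q^2)$ expansion, it produces closed integrals
\[
\frac{192}{\pi}L(f,4)=-8\int_0^1\frac{1+k^2}{1-k^2}\,K(k)K'(k)\log k\,\d k,\qquad
7\pi\zeta(3)=-8\int_0^1\frac{2k}{1-k^2}\,K(k)K'(k)\log k\,\d k,
\]
and combines them (after $k\mapsto(1-k)/(1+k)$) into a single $\int_0^1 K K'\log\frac{1+k}{1-k}\,\frac{\d k}{k}$. Second, using Wan's moment formula for $\int_0^1 k^m K K'\,\d k$ and two Wilf--Zeilberger pairs, it rewrites that integral as a double sum and matches it against the square of the $K$-series, yielding
\[
\frac{192}{\pi^4}L(f,4)-\frac{7\zeta(3)}{\pi^2}=\sum_{n\ge 0}\frac{1}{2n+1}\binom{2n}{n}^4 2^{-8n}.
\]
Third, a separate two-way evaluation of the auxiliary Mahler measure $R(1)=\m\bigl((u+u^{-1})(z+z^{-1})+(x+x^{-1})(y+y^{-1})\bigr)$, once as $7\zeta(3)/2\pi^2$ and once via Fourier expansions of $K(\sin\theta)$ and $m(4\sin\theta)$, gives
\[
-\frac{14\zeta(3)}{\pi^2}+4\log 2=1+\sum_{n\ge 1}\frac{4n+1}{2n(2n+1)}\binom{2n}{n}^4 2^{-8n}.
\]
A partial-fraction split $\frac{1}{2n}=\frac{4n+1}{2n(2n+1)}-\frac{1}{2n+1}$ then feeds the last two displays into your Step 1 formula and produces exactly $\frac{192}{\pi^4}L(f,4)+\frac{7}{\pi^2}\zeta(3)$, with the $4\log 2$ cancelling as you anticipated. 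None of this uses a mirror map; the modular input is the explicit eta-product for $f$ together with classical elliptic-integral identities, and the combinatorial input is the WZ method.
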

Observe that, by the well-known functional equations:
\begin{align*}
\left(\frac{\sqrt{8}}{2\pi}\right)^s\Gamma(s)L(f,s)&=\left(\frac{\sqrt{8}}{2\pi}\right)^{4-s}\Gamma(4-s)L(f,4-s),\\
\zeta(s)&=2^s\pi^{s-1}\sin\left(\frac{\pi s}{2}\right)\Gamma(1-s)\zeta(1-s),
\end{align*}
the formula in Theorem~\ref{T:main} can be rephrased as
\begin{equation}\label{E:main}
\m(R_{16}) = \m((x+x^{-1})(y+y^{-1})(z+z^{-1})(w+w^{-1})-16)=\frac{192}{\pi^4}L(f,4)+\frac{7\zeta(3)}{\pi^2}.
\end{equation}
To prove \eqref{E:main}, we require some new formulas for the $L$-value and $\zeta(3)/\pi^2$. These formulas will be verified in the subsequent sections. It is worth pointing out that the Wilf-Zeilberger method plays an important role in simplifying parts of the proofs involving difficult integrals. Another example of the WZ method applied to proving relations between Mahler measures can be found in \cite{WZ}.  Whereas most of the proved formulas in the lower dimensional cases involve CM newforms, we note also that $f$ is a non-CM newform.  This is a major reason why we require new techniques in the proof that differ from the CM case.

Let us conclude this section by stating a crucial result relating Mahler measures to hypergeometric series, which will be used later.  Combined with \eqref{E:main} this theorem also implies the hypergeometric evaluation,
\begin{equation}
  \pFq{6}{5}{\frac{3}{2},\frac{3}{2},
\frac{3}{2},\frac{3}{2},1,1}{2,2,2,2,2}{1} = 128\log 2 - \frac{6144}{\pi^4} L(f,4) - \frac{224}{\pi^2} \zeta(3).
\end{equation}
The following general result can be proved easily using standard techniques from the theory of Mahler measures. (See for example \cite[Prop. 2.2]{Rogers}.)

\begin{theorem}\label{T:hypergeom}
If $|k|\geq 16$, then
\begin{equation*}
\m(R_k)=\Re\left(\log(k)-\frac{8}{k^2}\pFq{6}{5}{\frac{3}{2},\frac{3}{2},
\frac{3}{2},\frac{3}{2},1,1}{2,2,2,2,2}{\frac{256}{k^2}}
\right),
\end{equation*}
where
\begin{equation*}
\pFq{p}{q}{a_1,a_2,\ldots,a_p}{b_1,b_2,\ldots,b_q}{x}=\sum_{n=0}^{\infty}\frac{(a_1)_n\cdots(a_p)_n}{(b_1)_n\cdots(b_q)_n}\frac{x^n}{n!},
\end{equation*}
and $(c)_n=\Gamma(c+n)/\Gamma(c)$.
\end{theorem}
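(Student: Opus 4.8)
The plan is to expand the defining integral as a power series in $1/k$ and then recognize the resulting coefficients as the stated hypergeometric series. First I would parametrize the torus by $x=e^{2\pi i\theta_1},\dots,w=e^{2\pi i\theta_4}$, so that each factor $x+x^{-1}$ becomes $2\cos(2\pi\theta_1)$, and the product becomes the \emph{real} quantity $Q(\theta)=16\prod_{j=1}^4\cos(2\pi\theta_j)$, which satisfies $|Q|\le 16\le|k|$. Writing $k-Q=k(1-Q/k)$ and taking real parts gives $\log|k-Q|=\Re\log k+\Re\log(1-Q/k)$, and since $|Q/k|\le 1$ almost everywhere I can expand
\[
\log\bigl(1-Q/k\bigr)=-\sum_{n=1}^\infty\frac{1}{n}\Bigl(\frac{Q}{k}\Bigr)^n,
\]
which converges off the measure-zero set where $|Q/k|=1$. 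Thus $\m(R_k)=\Re\bigl(\log k-\sum_{n\ge1}\tfrac1{nk^n}\int_{\TT^4}Q^n\,d\theta\bigr)$, provided term-by-term integration is justified.

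Next I would justify the interchange and evaluate the torus integrals. For each $n$ the integral factors as $\int_{\TT^4}Q^n\,d\theta=16^n\bigl(\int_0^1\cos^n(2\pi\theta)\,d\theta\bigr)^4$; the one-variable integral vanishes for odd $n$ and equals $\binom{2m}{m}/4^m$ for $n=2m$, so only even powers survive and
\[
\int_{\TT^4}Q^{2m}\,d\theta=16^{2m}\Bigl(\frac{1}{4^m}\binom{2m}{m}\Bigr)^4=\binom{2m}{m}^4.
\]
To legitimize the interchange I would bound $\int_{\TT^4}|Q|^n\,d\theta$ using the elementary decay $\int_0^1|\cos(2\pi\theta)|^n\,d\theta=O(n^{-1/2})$ in each variable, giving $\int_{\TT^4}|Q|^n\,d\theta=O\!\bigl(16^n n^{-2}\bigr)$; then $\sum_n \tfrac1{n|k|^n}\int|Q|^n\,d\theta=O\!\bigl(\sum_n n^{-3}\bigr)<\infty$ whenever $|k|\ge 16$, so Fubini applies even at the critical radius. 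This yields
\[
\m(R_k)=\Re\Bigl(\log k-\sum_{m=1}^\infty\frac{1}{2m\,k^{2m}}\binom{2m}{m}^4\Bigr).
\]

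Finally I would match this series with the ${}_6F_5$. Using $\binom{2m}{m}=4^m(1/2)_m/m!$, the summand becomes $\tfrac{(1/2)_m^4}{2m\,(m!)^4}(256/k^2)^m$, so with $t=256/k^2$ the tail is $\sum_{m\ge1}\tfrac{(1/2)_m^4}{2m\,(m!)^4}\,t^m$. Reindexing $m=n+1$ and applying $(3/2)_{m-1}=2(1/2)_m$ and $(m-1)!=m!/m$ turns this into $\tfrac{t}{32}\sum_{n\ge0}\tfrac{(3/2)_n^4}{(n+1)^5(n!)^4}t^n$, and since $(2)_n^5=((n+1)!)^5$ and $(1)_n^2/n!=n!$ one checks that $\sum_{n\ge0}\tfrac{(3/2)_n^4}{(n+1)^5(n!)^4}t^n$ is exactly the given ${}_6F_5$ at $t$; together with $\tfrac{t}{32}=8/k^2$ this reproduces the claimed formula. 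The analytically delicate point is the convergence/interchange at the boundary $|k|=16$ (where $t=1$), but the $O(n^{-3})$ majorant above settles it, so the only remaining work is the Pochhammer bookkeeping, which is routine. This is precisely the pattern of \cite[Prop.~2.2]{Rogers}.
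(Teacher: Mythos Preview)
Your argument is correct and is precisely the ``standard technique'' the paper alludes to (the paper gives no proof beyond citing \cite[Prop.~2.2]{Rogers}): expand $\log|k-Q|$ on the torus, use the factorized cosine moments to get $\sum_{m\ge1}\binom{2m}{m}^4/(2m\,k^{2m})$, and reindex into the ${}_6F_5$. Your $O(n^{-3})$ majorant cleanly handles the boundary case $|k|=16$, and the Pochhammer bookkeeping checks out, so there is nothing to add.
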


\section{A formula for $L(f,4)$} \label{Sec:Lseries}
Throughout this paper we will use the notation
\begin{equation}
F(\alpha):=\pFq{2}{1}{\frac{1}{2},\frac{1}{2}}{1}{\alpha},
\end{equation}
and we also employ the classical notations for elliptic integrals:
\begin{equation}
K(k)=\frac{\pi}{2}\pFq{2}{1}{\frac{1}{2},\frac{1}{2}}{1}{k^2},
\quad K'(k)=\frac{\pi}{2}\pFq{2}{1}{\frac{1}{2},\frac{1}{2}}{1}{1-k^2}.
\end{equation}
We first give some integral representations for $L(f,4)$ and Ap\'{e}ry's constant $\zeta(3)$ in terms of the product $KK'$. The main idea of the proof below essentially comes from \cite{RZ0}.
\begin{theorem} The following formulas are true:
\begin{align}
\frac{192}{\pi}L(f,4)=&-8\int_{0}^{1} \left(\frac{1+k^2}{1-k^2}\right) K(k) K'(k)\log k\,\d k,\label{L(f,4) intermed}\\
7\pi\zeta(3)=&-8\int_{0}^{1} \left(\frac{2k}{1-k^2} \right)K(k)K'(k)\log k\, \d k\label{zeta(3) intermed}
\end{align}
\end{theorem}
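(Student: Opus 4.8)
The plan is to pass from the real integrals over $k\in(0,1)$ to integrals of modular forms on the imaginary axis, following the Rogers--Zudilin strategy of \cite{RZ0}. I would introduce the nome $Q=e^{-\pi K'/K}=e^{\pi i\tau}$, so that $\tau=iK'/K$ descends the positive imaginary axis as $k$ increases from $0$ to $1$, and the modulus becomes the modular function $k^2=\lambda(\tau)=\theta_2^4/\theta_3^4$. Under this change of variable I would record three ingredients: the classical evaluation $K=\frac{\pi}{2}\theta_3^2$; the relation $K'=-\frac{1}{2}\theta_3^2\log Q=-\frac{\pi i\tau}{2}\theta_3^2$, which follows from $\log Q=-\pi K'/K$; and, via the derivative formula $\frac{\d\lambda}{\d\tau}=\pi i\,\theta_3^4\,\lambda(1-\lambda)$, the Jacobian $\d k=\frac{\pi i}{2}\theta_3^4\,k\,k'^2\,\d\tau$ with $k'^2=1-k^2$.

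The rational prefactors are tailored precisely to cancel the $k'^2$ coming from the Jacobian. For \eqref{L(f,4) intermed}, the factor $\frac{1+k^2}{1-k^2}=\frac{1+k^2}{k'^2}$ times $K K'\,\d k$ collapses to $\frac{\pi^3}{8}(1+k^2)k\,\theta_3^8\,\tau\,\d\tau$, a holomorphic weight-$4$ theta combination against $\tau$; similarly \eqref{zeta(3) intermed} produces $\frac{\pi^3}{8}\cdot 2k^2\theta_3^8\,\tau\,\d\tau$. Setting $\tau=iy$, the first integral becomes a constant multiple of $\int_0^\infty (1+k^2)k\,\theta_3^8\big|_{\tau=iy}\;y\log k\;\d y$, and the second the analogous integral with $2k^2\theta_3^8$. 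The two powers of $y$ needed to produce a cubic Dirichlet denominator arise as one factor from the explicit $\tau$ (i.e.\ from $K'$) and one from the leading behavior $\log k=\log 4+\tfrac12\log Q+\cdots=\log 4-\tfrac{\pi y}{2}+\cdots$.

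Next I would expand the theta combinations and $\log k$ as $q$-series in $Q$ and integrate term by term using $\int_0^\infty y^2e^{-\pi m y}\,\d y=2/(\pi m)^3$, reducing each integral to a weight-$3$ Dirichlet series in the coefficients of the corresponding weight-$4$ form, together with lower-order pieces (from the $\log 4$ term and the $O(Q)$ part of $\log k$) that yield convergent $\int_0^\infty y e^{-\pi m y}\,\d y$-type sums. For \eqref{zeta(3) intermed} the combination $2k^2\theta_3^8$ is a genuine series in $Q$ whose Eisenstein part produces $\zeta(3)$. For \eqref{L(f,4) intermed}, however, $(1+k^2)k\,\theta_3^8$ expands in \emph{half-integer} powers of $Q$, so the resulting sum runs over odd denominators, $\sum_m d_m/(2m-1)^3$, which is not term-by-term equal to $L(f,4)=\sum_n a_n/n^4$.

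The main obstacle is exactly this last identification: bridging the odd-denominator weight-$3$ theta sum and the weight-$4$ $L$-value $L(f,4)$ of the non-CM newform $f=\eta^4(2\tau)\eta^4(4\tau)$. This is where the Rogers--Zudilin double-sum rearrangement must be invoked, converting the elementary Lambert/Dirichlet series into the desired $L$-value and simultaneously isolating the cusp-form contribution from the Eisenstein contribution. The remaining work is careful bookkeeping: separating the cuspidal piece matching $f$ from the Eisenstein piece feeding $\zeta(3)$, tracking all factors of $\pi$ and $2$ (and the $\log 4$ and $O(Q)$ remainders) through the substitution, and checking integrability at the endpoints $k\to 0$ and $k\to 1$, where $K'\log k\sim-(\log k)^2$ and $\frac{1}{1-k^2}K\log k\sim\tfrac14\log(1-k^2)$ are each integrable.
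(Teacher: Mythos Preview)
Your modular bookkeeping is correct: the substitution $\tau=iK'/K$ does turn the two integrals into $\int_0^\infty(\hbox{weight-}4\ \theta\hbox{-combination})\cdot y\cdot\log k\,\d y$, with exactly the theta products you wrote. The difficulty is in what you do next, and here your plan diverges from the paper's in a way that leaves a real gap.

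For \eqref{zeta(3) intermed} the paper does not use the modular route at all: it simply quotes Wan's identity $\frac{7}{8}\pi\zeta(3)=\int_0^1\frac{-\log(1-k^2)}{k}K(k)K'(k)\,\d k$ and applies the complementary-modulus involution $k\leftrightarrow k'=\sqrt{1-k^2}$, under which $KK'$ is invariant and the integrand transforms directly into $-\frac{2k\log k}{1-k^2}KK'$. Your approach would rederive this from scratch, which is possible but unnecessary.

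For \eqref{L(f,4) intermed} the paper runs the argument in the \emph{opposite} direction, and the order matters. It starts from the Mellin integral $\frac{6}{(2\pi)^4}L(f,4)=\int_0^\infty u^3 f(iu)\,\d u$, uses a theta transformation to write $f(iu)=u^{-2}\bigl(e^{-2\pi u}\psi^4(e^{-4\pi u})\bigr)\bigl(e^{-\pi/4u}\psi^4(e^{-\pi/2u})\bigr)$ as a product of pieces in $u$ and in $1/u$, expands each $\psi^4$ as a double Lambert sum, and then performs the Rogers--Zudilin substitution $u\mapsto (2j+1)u/(2n+1)$ \emph{inside the integral}. Only after that substitution do the sums reassemble into the Lambert series $\sum_{j\ge0}\frac{(2j+1)^3 q^{2j+1}}{1-q^{2(2j+1)}}=\tfrac14\sqrt{\alpha}(1+\alpha)F^4(\alpha)$ and the logarithm $\log\bigl(\tfrac{1}{\sqrt2}\eta^3(4iu)/(\eta(2iu)\eta^2(8iu))\bigr)=-\tfrac18\log\alpha$; the final change $u=\tfrac14 F(1-\alpha)/F(\alpha)$, $\alpha=k^2$, then gives the $k$-integral.

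The gap in your plan is the step ``expand $\log k$ as $\log 4-\tfrac{\pi y}{2}+O(Q)$ and integrate term by term, then apply RZ''. The RZ mechanism is an integral change of variable $u\mapsto cu$ acting on a product of $u$- and $1/u$-series; it is not a rearrangement one can apply to an already-summed cubic Dirichlet series $\sum d_m/(2m-1)^3$. If you integrate first, you lose the structure that RZ exploits. The fix is not to split $\log k$ but to keep it whole and recognise it as $-\tfrac18\log\alpha=\log$ of the specific eta quotient above; that eta quotient un-resums to $\sum_{n,r\ge0}\frac{1}{2n+1}e^{-\pi(2n+1)(2r+1)/(4u)}$, and together with $(1+k^2)k\,\theta_3^8\propto\sum_{j,k\ge0}(2j+1)^3 e^{-2\pi u(2j+1)(2k+1)}$ this is precisely the post-RZ integrand. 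Undoing the RZ substitution then lands you on the Mellin integral of $f$. In other words, your route can be completed, but only by reading the paper's proof backwards at the crucial step---not by the term-by-term integration you describe.
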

\begin{proof}
Formula \eqref{zeta(3) intermed} is a trivial rearrangement of the following identity:
\begin{equation}\label{E:Wan}
\frac{7}{8}\pi\zeta(3)=\int_{0}^{1}\left(\frac{-\log(1-k^2)}{k}\right) K(k) K'(k)\,\d k,
\end{equation}
due to Wan \cite{Wan}.
For any positive real number $u$, let $q=q(u):=e^{-2\pi u}.$ Then
\begin{align*}
f(i u)=&\left(\frac{\eta^2(4i u)}{\eta(2 i u)}\right)^4\left(\frac{\eta^2(2i u)}{\eta(4i u)}\right)^4
= q\psi^4(q^2)\varphi^4(-q^2),
\end{align*}
where
\begin{equation*}
\psi(q)=\sum_{n=0}^{\infty}q^{n(n+1)/2},\quad \varphi(q)=\sum_{n=-\infty}^{\infty}q^{n^2}.
\end{equation*}
Then
\begin{equation*}
\eta^4(2 i u)\eta^4(4i u)=\frac{1}{u^2}~e^{-2\pi u}\psi^4(e^{-4\pi u}) e^{-\frac{\pi}{4u}}\psi^4\left(e^{-\frac{\pi}{2u}}\right).
\end{equation*}
Applying a result from Ramanujan's notebooks \cite{Berndt},
\begin{equation*}
q\psi^4(q^2)=\sum_{n,k\ge 0}(2n+1)q^{(2n+1)(2k+1)},
\end{equation*}
we can write the cusp form as a four-dimensional series:
\begin{equation*}
\eta^4(2 i u)\eta^4(4i u)=\frac{1}{u^2}~\sum_{n,k,j,r\ge 0}(2n+1)(2j+1)e^{-2\pi u (2n+1)(2k+1)-\frac{\pi}{4 u}(2j+1)(2r+1)}.
\end{equation*}
Taking the Mellin transform both of sides yields
\begin{equation*}
\frac{6 L(f,4)}{(2\pi)^4}=\sum_{n,k,j,r\ge 0}(2n+1)(2j+1)\int_{0}^{\infty}u e^{-2\pi u (2n+1)(2k+1)-\frac{\pi}{4 u}(2j+1)(2r+1)}\,\d u.
\end{equation*}
Next, we use the transformation $u\mapsto (2j+1) u/(2n+1)$ to obtain:
\begin{align*}
\frac{6L(f,4)}{(2\pi)^4}&= \int_{0}^{\infty}u\sum_{j,k\ge 0}(2j+1)^3 e^{-2\pi u (2j+1)(2k+1)}\sum_{n,r\ge 0}\frac{1}{2n+1}e^{-\frac{\pi}{4 u}(2n+1)(2r+1)}\,\d u\\
&=\frac{1}{2}\int_{0}^{\infty}u\sum_{j\ge 0} \frac{(2j+1)^3 e^{-2\pi u (2j+1)}}{1-e^{-4\pi u (2j+1)}}\log\prod_{r=1}^{\infty}\frac{\left(1-e^{-\frac{\pi r}{2 u}}\right)^3}{\left(1-e^{-\frac{\pi r}{ u}}\right)\left(1-e^{-\frac{\pi r}{4 u}}\right)^2}\,\d u\\
&=\frac{1}{2}\int_{0}^{\infty}u\sum_{j\ge 0} \frac{(2j+1)^3 e^{-2\pi u (2j+1)}}{1-e^{-4\pi u (2j+1)}}\log\left(\frac{1}{\sqrt{2}}\frac{\eta^3(4 i u)}{\eta(2 i u)\eta^2(8 i u)}\right)\,\d u.
\end{align*}
Let $u=\frac{1}{4}\frac{F(1-\alpha)}{F(\alpha)}$. Then the new region of integration is $\alpha\in[1,0]$. We also have the formulas
\begin{align*}
\frac{1}{\sqrt{2}}\frac{\eta^3(4 i u)}{\eta(2 i u)\eta^2(8 i u)}&=\alpha^{-1/8},\\
\sum_{j\ge 0} \frac{(2j+1)^3 e^{-2\pi u (2j+1)}}{1-e^{-4\pi u (2j+1)}}&=\frac{1}{4} \sqrt{\alpha} (1 + \alpha) F^4(\alpha),\\
\frac{\d u}{\d\alpha}&=-\frac{1}{4\pi\alpha(1-\alpha)F^2(\alpha)}.
\end{align*}
Thus
\begin{equation*}
\frac{6 L(f,4)}{(2\pi)^4}=-\frac{1}{1024\pi}\int_{0}^{1}\frac{(1+\alpha)}{\sqrt{\alpha}(1-\alpha)}F(1-\alpha)F(\alpha)\log \alpha\,\d \alpha.
\end{equation*}
Finally, formula \eqref{L(f,4) intermed} follows by setting  $\alpha=k^2$.
\end{proof}
\begin{corollary}
The following formulas are true:
\begin{align}
\frac{192}{\pi^4}L(f,4)+\frac{7\zeta(3)}{\pi^2}&=\frac{8}{\pi^3}\int_{0}^{1}K(k)K'(k)\log\left(\frac{1+k}{1-k}\right)\frac{\d k}{k}\label{corollary formula},\\
\frac{12}{\pi}L(f,4)&=\int_{0}^{1}K(k)K'(k)\log(1+k)\frac{\d k}{k} \label{cor formula2},\\
-\frac{12}{\pi}L(f,4)-\frac{7}{8}\pi\zeta(3)&=\int_{0}^{1}K(k)K'(k)\log(1-k)\frac{\d k}{k} \label{cor formula3}.
\end{align}
\end{corollary}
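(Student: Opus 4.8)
The plan is to obtain all three formulas from the two integral representations \eqref{L(f,4) intermed} and \eqref{zeta(3) intermed} just proved, supplemented by Wan's identity \eqref{E:Wan}. I would establish \eqref{corollary formula} first and then deduce \eqref{cor formula2} and \eqref{cor formula3} from it by elementary linear algebra. Dividing \eqref{L(f,4) intermed} and \eqref{zeta(3) intermed} through by $\pi^3$ and adding gives
\begin{align*}
\frac{192}{\pi^4}L(f,4)+\frac{7\zeta(3)}{\pi^2}
&= -\frac{8}{\pi^3}\int_0^1\left(\frac{1+k^2}{1-k^2}+\frac{2k}{1-k^2}\right)K(k)K'(k)\log k\,\d k\\
&= -\frac{8}{\pi^3}\int_0^1\frac{1+k}{1-k}K(k)K'(k)\log k\,\d k,
\end{align*}
since $\frac{1+k^2+2k}{1-k^2}=\frac{(1+k)^2}{(1-k)(1+k)}=\frac{1+k}{1-k}$. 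Thus \eqref{corollary formula} reduces to the single identity
\begin{equation*}
-\int_0^1\frac{1+k}{1-k}K(k)K'(k)\log k\,\d k=\int_0^1 K(k)K'(k)\log\left(\frac{1+k}{1-k}\right)\frac{\d k}{k}.
\end{equation*}

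The crux, and the step I expect to be the main obstacle, is to prove this identity via the involution $k\mapsto\frac{1-k}{1+k}$ of $[0,1]$ together with Landen's transformation. Setting $m=\frac{1-k}{1+k}$, its complementary modulus is $\sqrt{1-m^2}=\frac{2\sqrt k}{1+k}$, so the ascending and descending Landen transformations give
\begin{equation*}
K\left(\frac{1-k}{1+k}\right)=\frac{1+k}{2}K'(k),\qquad K'\left(\frac{1-k}{1+k}\right)=(1+k)K(k),
\end{equation*}
and hence $K\!\left(\frac{1-k}{1+k}\right)K'\!\left(\frac{1-k}{1+k}\right)=\frac{(1+k)^2}{2}K(k)K'(k)$. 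Substituting $k=\frac{1-t}{1+t}$ into the right-hand integral, one checks that $\frac{1+k}{1-k}=\frac1t$ and $\frac{\d k}{k}=-\frac{2}{1-t^2}\,\d t$; feeding in the product formula above, the accumulated factor is $\frac{(1+t)^2}{2}\cdot\frac{-2}{(1-t)(1+t)}=-\frac{1+t}{1-t}$, and after accounting for the reversal of the limits of integration the right-hand side collapses to $-\int_0^1\frac{1+t}{1-t}K(t)K'(t)\log t\,\d t$, which is the left-hand side. I would be careful to verify the two Landen identities on all of $(0,1)$ and to confirm that both integrands have only logarithmic (hence integrable) singularities at the endpoints, so that the change of variables is legitimate.

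For the remaining two formulas I would split the logarithms: write $A=\int_0^1 K(k)K'(k)\log(1+k)\,\frac{\d k}{k}$ and $B=\int_0^1 K(k)K'(k)\log(1-k)\,\frac{\d k}{k}$. Since $\log\frac{1+k}{1-k}=\log(1+k)-\log(1-k)$, the already-established \eqref{corollary formula} gives $A-B=\frac{24}{\pi}L(f,4)+\frac{7\pi\zeta(3)}{8}$, while Wan's formula \eqref{E:Wan}, combined with $\log(1-k^2)=\log(1+k)+\log(1-k)$, gives $A+B=-\frac{7\pi\zeta(3)}{8}$. Solving this $2\times2$ system yields $A=\frac{12}{\pi}L(f,4)$ and $B=-\frac{12}{\pi}L(f,4)-\frac{7\pi\zeta(3)}{8}$, which are exactly \eqref{cor formula2} and \eqref{cor formula3}.
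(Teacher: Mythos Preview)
Your proposal is correct and follows essentially the same route as the paper: add \eqref{L(f,4) intermed} and \eqref{zeta(3) intermed}, apply the involution $k\mapsto(1-k)/(1+k)$ together with the Landen identities $K\!\left(\frac{1-k}{1+k}\right)=\frac{1+k}{2}K'(k)$ and $K'\!\left(\frac{1-k}{1+k}\right)=(1+k)K(k)$ to obtain \eqref{corollary formula}, and then combine \eqref{corollary formula} with Wan's identity \eqref{E:Wan} to split off \eqref{cor formula2} and \eqref{cor formula3}. The paper's proof is terser but the substance is identical; your explicit $A\pm B$ bookkeeping and the endpoint/integrability check are exactly the details the paper leaves implicit.
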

\begin{proof}
Add formulas \eqref{L(f,4) intermed} and \eqref{zeta(3) intermed}. Then formula \eqref{corollary formula} follows immediately by applying the transformation $k\mapsto (1-k)/(1+k)$, and the identities
\begin{equation*}
K\left(\frac{1-k}{1+k}\right)=\frac{1+k}{2}K'(k), \quad K'\left(\frac{1-k}{1+k}\right)=(1+k)K(k).
\end{equation*}
Formulas \eqref{cor formula2} and \eqref{cor formula3} are merely trivial consequences of \eqref{corollary formula} and \eqref{E:Wan}.
\end{proof}

Now we proceed to the most difficult part of the calculation.  The following lemma is derived by the Wilf-Zeilberger method.

\begin{lemma}\label{L:WZ} The following identities are true when $n\ge 0$:
\begin{align}
\sum_{k=0}^{n}\frac{1}{2^{4k}}\binom{2k}{k}^2\frac{1}{2n-2k+1}&=\sum_{k=0}^{n}\frac{1}{2^{4k}}\binom{2k}{k}^2\frac{1}{n+k+1}\label{WZ crucial}\\
&=\frac{2^{4n}}{(2n+1)^2\binom{2n}{n}^2}\sum_{k=0}^{n}\frac{(4k+1)}{2^{8k}}
\binom{2k}{k}^4.\label{WZ noncrucial}
\end{align}
\end{lemma}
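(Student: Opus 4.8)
The plan is to prove both equalities in Lemma~\ref{L:WZ} by creative telescoping, taking advantage of the fact that every summand involved is a proper hypergeometric term in the pair $(n,k)$. Throughout, write $c_k:=2^{-4k}\binom{2k}{k}^2=\left((1/2)_k/k!\right)^2$, so that $c_{k+1}/c_k=(2k+1)^2/(2k+2)^2$; together with the obviously rational $n$- and $k$-ratios of $1/(2n-2k+1)$, $1/(n+k+1)$ and $(4k+1)c_k^2/((2n+1)^2c_n)$, this shows that Gosper's and Zeilberger's algorithms apply to each sum. Each will return an explicit rational certificate $R(n,k)$ and a short linear recurrence in $n$, after which the problem collapses to verifying one polynomial identity (obtained by clearing denominators in the telescoping relation) together with finitely many initial values.

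For the first equality \eqref{WZ crucial}, I set $F(n,k):=c_k\left(\frac{1}{2n-2k+1}-\frac{1}{n+k+1}\right)$ and $D(n):=\sum_{k=0}^{n}F(n,k)$, so the claim is that $D\equiv 0$. Running creative telescoping on the family $F(n,\cdot)$ produces a function $G(n,k)$ (a rational multiple of the constituent terms) satisfying $F(n+1,k)-F(n,k)=G(n,k+1)-G(n,k)$. Summing over $0\le k\le n$ and keeping track of the moving upper limit gives
\[
D(n+1)-D(n)=F(n+1,n+1)+G(n,n+1)-G(n,0),
\]
whose right-hand side is a closed-form (sum-free) expression that one checks vanishes identically. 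With $D(0)=0$ this forces $D\equiv 0$, which is \eqref{WZ crucial}.

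For the second equality \eqref{WZ noncrucial}, write $B(n):=\sum_{k=0}^{n}c_k/(n+k+1)$ for the middle sum and $S(n):=\sum_{k=0}^{n}(4k+1)c_k^2$ for the numerator on the right, so the claim becomes $(2n+1)^2c_nB(n)=S(n)$. Since $S(n+1)-S(n)=(4n+5)c_{n+1}^2$ by inspection, it is enough to show that $V(n):=(2n+1)^2c_nB(n)$ satisfies the same first-order recurrence and that $V(0)=S(0)=1$. Applying creative telescoping to $c_k/(n+k+1)$ yields the recurrence
\[
(2n+3)^2c_{n+1}B(n+1)-(2n+1)^2c_nB(n)=(4n+5)c_{n+1}^2,
\]
in which the inhomogeneous term is exactly $S(n+1)-S(n)$. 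An induction using $V(0)=1$ then gives $V(n)=S(n)$ for all $n$, which is \eqref{WZ noncrucial}.

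The main obstacle in both parts is the same: because the upper summation limit $k=n$ depends on $n$, the creative-telescoping recurrence---designed for sums over all $k$---must be corrected by the diagonal boundary contribution, namely the top term together with $G(n,\cdot)$ evaluated at the endpoints. This boundary data must be computed in closed form and shown to behave correctly: it cancels to zero in \eqref{WZ crucial}, and it reproduces precisely the increment $(4n+5)c_{n+1}^2$ in \eqref{WZ noncrucial}. Finding the certificates $R(n,k)$ is mechanical, but verifying the resulting polynomial identities (after clearing the factors $c_k$, $c_n$ and the linear denominators) and getting this boundary bookkeeping exactly right is where the real effort is concentrated.
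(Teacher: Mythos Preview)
Your proposal is correct and follows essentially the same WZ/creative-telescoping route as the paper, including the key point that the $n$-dependent upper limit forces you to track the diagonal boundary term $f(n+1,n+1)+G(n,n+1)-G(n,0)$.  The only organizational difference is that the paper multiplies each of the first two sums by $(2n+1)^2c_n$ and exhibits an explicit WZ pair for each, showing both equal $\sum_{j\le n}(4j+1)c_j^2$ (so \eqref{WZ crucial} drops out as a byproduct), whereas you handle \eqref{WZ crucial} by applying creative telescoping to the \emph{difference} $F(n,k)=c_k\bigl(\tfrac{1}{2n-2k+1}-\tfrac{1}{n+k+1}\bigr)$ and then run a separate telescoping for \eqref{WZ noncrucial}; your second step is literally the paper's second WZ pair.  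The paper's packaging has the mild advantage that the two explicit certificates it writes down visibly produce the common increment $(4j+1)c_j^2$, so nothing needs to be ``checked to vanish''; your version trades that for one fewer WZ computation but requires verifying that the boundary contribution for the difference is identically zero.
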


\begin{proof} We say that $f(n,k)$ and $g(n,k)$ are a WZ pair if they satisfy the relation:
\begin{equation*}
f(n+1,k)-f(n,k)=g(n,k+1)-g(n,k).
\end{equation*}
Since the function $g$ telescopes, summing both sides from $k=0$ to $k=n$ and adding $f(n+1,n+1)$ to either side yields
\begin{equation*}
\sum_{k=0}^{n+1}f(n+1,k)-\sum_{k=0}^{n}f(n,k)=f(n+1,n+1)+g(n,n+1)-g(n,0).
\end{equation*}
Thus, if we let
\begin{equation*}
h(n):=\sum_{k=0}^{n}f(n,k),
\end{equation*}
then the relation above is equivalent to
\begin{equation*}
h(n+1)-h(n)=f(n+1,n+1)+f(n,n+1)-g(n,0).
\end{equation*}
Finally, iterate down to zero, and let $n\mapsto n-1$, to obtain
\begin{equation}\label{WZ intermed}
h(n)=h(0)+\sum_{j=1}^{n} \bigl(f(j,j)+g(j-1,j)-g(j-1,0)\bigr).
\end{equation}
Now we substitute the following WZ pairs into \eqref{WZ intermed}, to recover equations \eqref{WZ crucial} and \eqref{WZ noncrucial}:
\begin{align*}
f(n,k)&=\frac{1}{2^{4 k+4 n}}\frac{ (2 n+1)^2 }{2n-2 k+1}\binom{2 k}{k}^2 \binom{2 n}{n}^2,\\
g(n,k)&=-\frac{1}{2^{4 k+4 n}}\frac{ k^2 (2 n+1)^2 }{(1+n)^2 (2n-2 k+3)} \binom{2 k}{k}^2 \binom{2 n}{n}^2,
\end{align*}
and also
\begin{align*}
f(n,k)&=\frac{1}{2^{4 k+4 n}}\frac{ (2 n+1)^2 }{n+ k+1}\binom{2 k}{k}^2 \binom{2 n}{n}^2,\\
g(n,k)&=\frac{1}{2^{4 k+4 n}}\frac{ k^2 (2 n+1)^2 }{(n+1)^2 (n+ k+1)}\binom{2 k}{k}^2 \binom{2 n}{n}^2.
\end{align*}
\end{proof}

\begin{theorem}The following formulas are true:
\begin{equation}
\frac{8}{\pi^3}\int_{0}^{1}K(k)K'(k)\log\left(\frac{1+k}{1-k}\right)\frac{\d k}{k}=2\sum_{n=0}^{\infty}\frac{1}{(2n+1)^2}\sum_{k=0}^{n}\frac{(4k+1)}{2^{8k}} \binom{2k}{k}^4,\label{elliptical integral double sum}
\end{equation}
\begin{equation}
\frac{14\zeta(3)}{\pi^2}+\sum_{n=0}^{\infty}\frac{1}{2^{8n}} \binom{2n}{n}^4 \frac{1}{2n+1}=2\sum_{n=0}^{\infty}\frac{1}{(2n+1)^2}\sum_{k=0}^{n} \frac{(4k+1)}{2^{8k}}  \binom{2k}{k}^4.\label{elliptical integral double sum 2}
\end{equation}
\end{theorem}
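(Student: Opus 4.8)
The plan is to convert both identities into statements about the moments $\int_0^1 K(k)K'(k)k^{2m}\,\d k$, evaluate those moments in closed form, and then feed the resulting sums through the two faces of Lemma~\ref{L:WZ}. Throughout write $a_n:=2^{-4n}\binom{2n}{n}^2$, so that $\tfrac{2}{\pi}K(k)=\sum_{n\ge0}a_n k^{2n}$ and $\tfrac{2}{\pi}K'(k)=\sum_{n\ge0}a_n(1-k^2)^n$. The one analytic input I need is the moment evaluation
\[
\int_0^1 K'(k)k^{2\ell}\,\d k=\frac{\pi^2}{4}\,a_\ell,
\]
which I would prove by expanding $K'$ in powers of $1-k^2$, integrating term by term against the Beta function, and summing the emergent ${}_2F_1(\tfrac12,\tfrac12;\ell+\tfrac32;1)$ by Gauss's theorem (the series converges since $c-a-b=\ell+\tfrac12>0$). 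Expanding one factor $K(k)=\tfrac{\pi}{2}\sum_j a_jk^{2j}$ and integrating term by term (legitimate by Tonelli, as all summands are nonnegative) then gives
\[
\int_0^1 K(k)K'(k)k^{2m}\,\d k=\frac{\pi}{2}\sum_{j\ge0}a_j\int_0^1 K'(k)k^{2m+2j}\,\d k=\frac{\pi^3}{8}\sum_{j\ge0}a_j a_{m+j}.
\]

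For \eqref{elliptical integral double sum} I would use the series $\tfrac1k\log\tfrac{1+k}{1-k}=2\sum_{m\ge0}\tfrac{k^{2m}}{2m+1}$ to write the left-hand side as
\[
\frac{16}{\pi^3}\sum_{m\ge0}\frac{1}{2m+1}\int_0^1 K(k)K'(k)k^{2m}\,\d k=2\sum_{m\ge0}\frac{1}{2m+1}\sum_{j\ge0}a_j a_{m+j}.
\]
Reindexing by $n=m+j$ turns this into $2\sum_{n\ge0}a_n\sum_{j=0}^{n}\frac{a_j}{2n-2j+1}$, and Lemma~\ref{L:WZ}, in the form $\sum_{j=0}^n\frac{a_j}{2n-2j+1}=\frac{1}{(2n+1)^2 a_n}\sum_{k=0}^n(4k+1)a_k^2$ obtained by equating the first sum of \eqref{WZ crucial} with \eqref{WZ noncrucial}, collapses the inner sum to produce exactly the right-hand side.

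For \eqref{elliptical integral double sum 2} I would instead use the second sum of \eqref{WZ crucial}, namely $\sum_{k=0}^n\frac{a_k}{n+k+1}=\frac{1}{(2n+1)^2 a_n}\sum_{k=0}^n(4k+1)a_k^2$, to rewrite the right-hand double sum as $2\sum_{n\ge k\ge0}\frac{a_n a_k}{n+k+1}$. Splitting the two orderings via $2\sum_{n\ge k}=\sum_{n,k}+\sum_{n=k}$ gives $\Sigma+\sum_n\frac{a_n^2}{2n+1}$, where $\Sigma:=\sum_{n,k\ge0}\frac{a_n a_k}{n+k+1}$. Since $\Sigma=\int_0^1\bigl(\tfrac{2}{\pi}K(\sqrt x)\bigr)^2\,\d x=\frac{8}{\pi^2}\int_0^1 kK(k)^2\,\d k$, the identity reduces to the closed form $\int_0^1 kK(k)^2\,\d k=\tfrac74\zeta(3)$, giving $\Sigma=\tfrac{14\zeta(3)}{\pi^2}$ and hence the claim (the leftover single sum $\sum_n\frac{a_n^2}{2n+1}$ matching on both sides).

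The main obstacle is establishing the two nonelementary evaluations on which the scheme rests: the moment formula $\int_0^1 K'(k)k^{2\ell}\,\d k=\tfrac{\pi^2}{4}a_\ell$, together with the justification of termwise integration up to the logarithmic singularity at $k=1$; and the value $\int_0^1 kK(k)^2\,\d k=\tfrac74\zeta(3)$, which lies in the same circle of ideas as Wan's identity \eqref{E:Wan} and can either be cited or re-derived by the Mellin-transform argument used for \eqref{L(f,4) intermed}. Once these are in hand, the rest is bookkeeping driven by the two forms of Lemma~\ref{L:WZ}, which is precisely why the lemma records both the $\tfrac{1}{2n-2k+1}$ and the $\tfrac{1}{n+k+1}$ representations in \eqref{WZ crucial}.
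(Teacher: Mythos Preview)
Your proposal is correct and follows essentially the same route as the paper. For \eqref{elliptical integral double sum} the paper expands the logarithm, inserts Wan's moment formula
\[
\int_{0}^{1}k^{2m}K(k)K'(k)\,\d k=\frac{\pi^{2}}{8}\,\frac{\Gamma\!\left(m+\tfrac12\right)^{2}}{\Gamma(m+1)^{2}}\,\pFq{4}{3}{\frac12,\frac12,m+\frac12,m+\frac12}{1,m+1,m+1}{1},
\]
reindexes by $n=m+k$, and finishes with \eqref{WZ crucial}--\eqref{WZ noncrucial}; your expansion of $K$ together with the auxiliary evaluation $\int_{0}^{1}K'(k)k^{2\ell}\,\d k=\tfrac{\pi^{2}}{4}a_{\ell}$ simply reproduces the same moment as $\tfrac{\pi^{3}}{8}\sum_{j\ge0}a_{j}a_{m+j}$, which is Wan's ${}_4F_3$ unpacked. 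For \eqref{elliptical integral double sum 2} your splitting $2\sum_{n\ge k}=\sum_{n,k}+\sum_{n=k}$ is exactly the paper's identity $\bigl(\sum a(n)\bigr)^{2}+\sum a(n)^{2}=2\sum_{n}a(n)\sum_{j\le n}a(j)$, and both arguments terminate in the evaluation $\int_{0}^{1}kK(k)^{2}\,\d k=\tfrac{7}{4}\zeta(3)$ (which the paper also cites from Wan) followed by the $\tfrac{1}{n+k+1}$ face of Lemma~\ref{L:WZ}. The only genuine novelty in your write-up is the self-contained derivation of the $K'$-moment via Gauss's ${}_2F_1$ summation in place of quoting Wan, which is a pleasant simplification but not a different strategy.
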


\begin{proof} Expand the logarithm in a Taylor series, and then apply Wan's formula for the moments \cite{Wan}:
\begin{equation*}
\int_{0}^{1}k^mK(k)K'(k)\,\d k = \frac{\pi^2}{8}\frac{\Gamma\left(\frac{m+1}{2}\right)^2}{\Gamma\left(\frac{m+2}{2}\right)^2} \,\pFq{4}{3}{\frac{1}{2},\frac{1}{2},\frac{m+1}{2},\frac{m+1}{2}}{1,\frac{m+2}{2},\frac{m+2}{2}}{1},
\end{equation*}
to obtain
\begin{align*}
\frac{8}{\pi^3}\int_{0}^{1}K(k)K'(k)\log\left(\frac{1+k}{1-k}\right)\frac{\d k}{k}&=2\sum_{m=0}^{\infty}\frac{1}{2m+1}\frac{\left(\frac1 2\right)_m^2}{(1)_m^2}\, \pFq{4}{3}{\frac{1}{2},\frac{1}{2},m+\frac{1}{2},m+\frac{1}{2}}{1,m+1,m+1}{1}\\
&=2\sum_{m=0}^{\infty}\frac{1}{2m+1}\frac{\left(\frac1 2\right)_m^2}{(1)_m^2}\sum_{k=0}^{\infty}\frac{\left(\frac12\right)_k^2\left(\frac12+m\right)_k^2}{(1)_k^2(1+m)_k^2}\\
&=2\sum_{m=0}^{\infty}\frac{1}{2m+1}\sum_{k=0}^{\infty}\frac{\left(\frac12\right)_k^2\left(\frac12\right)_{m+k}^2}{(1)_k^2(1)_{m+k}^2}\\
&=2\sum_{n=0}^{\infty}\frac{\left(\frac12\right)_{n}^2}{(1)_{n}^2}\sum_{k=0}^{n}\frac{\left(\frac12\right)_k^2}{(1)_k^2}\frac{1}{2n-2k+1}.
\end{align*}
The last step follows from setting $n=m+k$.  Then we apply Lemma~\ref{L:WZ} to the inner summation to complete the proof of \eqref{elliptical integral double sum}.

To prove formula \eqref{elliptical integral double sum 2}, first recall that the square of any infinite series can be written as
\begin{equation}\label{trivial summation rearrangement}
\left(\sum_{n=0}^{\infty}a(n)\right)^2+\sum_{n=0}^{\infty}a(n)^2=2\sum_{n=0}^{\infty}a(n)\sum_{j=0}^{n}a(j).
\end{equation}
Since the elliptical integral $K(k)$ can be expressed as
\begin{equation*}
\frac{2}{\pi}K(k)=\sum_{n=0}^{\infty}\frac{1}{2^{4n}} \binom{2n}{n}^2 k^{2n},
\end{equation*}
formula \eqref{trivial summation rearrangement} becomes
\begin{equation*}
\frac{4}{\pi^2}k K^2(k)+\sum_{n=0}^{\infty}\frac{1}{2^{8n}} \binom{2n}{n}^4 k^{4n+1}=2\sum_{n=0}^{\infty}\frac{1}{2^{4n}} \binom{2n}{n}^2\sum_{j=0}^{n}\frac{1}{2^{4j}} \binom{2j}{j}^2 k^{2n+2j+1}.
\end{equation*}
Next, integrate both sides for $k\in[0,1]$.  We then appeal to Wan's result
\begin{equation*}
\frac{4}{\pi^2}\int_{0}^{1}k K^2(k)\,\d k=\frac{7\zeta(3)}{\pi^2},
\end{equation*}
and simplify the resulting double-sum on the right with \eqref{WZ noncrucial}.  This completes the proof of \eqref{elliptical integral double sum 2}.
\end{proof}
To complete the proof of Theorem~\ref{T:main}, we need an additional formula for $\zeta(3)/\pi^2$, which will be proved in the next section.

\section{A formula for $\zeta(3)/\pi^2$ and proof of the main theorem} \label{Sec:zeta}
The goal of this section is to evaluate a certain hypergeometric series in terms of $\zeta(3)/\pi^2$ and $\log2$.  Equation \eqref{zeta3 formula} is the last ingredient that we need to complete the proof of our main result, Theorem \ref{T:main}.  While this formula is essentially a `byproduct' of the Mahler measure considerations, it is actually interesting in its own right.  It is quite significant that the hypergeometric series has four binomial coefficients in the numerator.  It turns out that there are very few instances where hypergeometric functions with more than three binomial coefficients have been explicitly evaluated.  Ramanujan proved many formulas for cases with three binomial coefficients; those results are closely tied to questions about modular forms and class numbers \cite{Borwein, Chudnovsky, Ra}.  More recently, Guillera discovered many conjectural formulas for constants like $1/\pi^2$ using numerical searches.  Most of Guillera's identities involve four or more binomial coefficients, and only a select few of his results have been rigorously proved.  A full survey of recent developments is beyond the scope of this paper, but we refer the interested reader to \cite{Borwein 2,G10Rama,ZS}.

\begin{theorem}\label{zeta3 theorem}The following formula is true:
\begin{equation}\label{zeta3 formula}
-\frac{14\zeta(3)}{\pi^2}+4\log2=1+\sum_{n=1}^{\infty}\frac{(4n+1)}{(2n)(2n+1)} \binom{2n}{n}^4\frac{1}{2^{8n}}.
\end{equation}
\end{theorem}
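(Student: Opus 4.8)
The plan is to first strip off the rational prefactor by partial fractions. Since $\frac{4n+1}{(2n)(2n+1)}=\frac{1}{2n}+\frac{1}{2n+1}$, and the leading $1$ on the right of \eqref{zeta3 formula} is exactly the $n=0$ term of $\sum_{n\ge 0}b(n)/(2n+1)$ (where I abbreviate $b(n):=\binom{2n}{n}^4/2^{8n}$), the claim is equivalent to
\[
A+B=4\log 2-\frac{14\zeta(3)}{\pi^2},\qquad A:=\sum_{n\ge 1}\frac{b(n)}{2n},\quad B:=\sum_{n\ge 0}\frac{b(n)}{2n+1}.
\]
The point of this reformulation is that $b(n)=\bigl(\binom{2n}{n}^2/2^{4n}\bigr)^2$ is the square of the Taylor coefficient of $\tfrac{2}{\pi}K$, so $A$ and $B$ can be written through integrals of $K$ alone. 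In contrast to Section~\ref{Sec:Lseries}, no product $KK'$ — and hence no $L(f,4)$ — will enter, which explains why the answer is assembled only from $\zeta(3)$ and $\log 2$.

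Concretely, I would introduce the Hadamard square $g(x):=\sum_{n\ge 0}b(n)x^{2n}$. Termwise integration gives $B=\int_0^1 g(x)\,\d x$ and $A=\int_0^1\frac{g(x)-1}{x}\,\d x$, so that
\[
A+B=\int_0^1\frac{(1+x)g(x)-1}{x}\,\d x.
\]
Using $\tfrac{2}{\pi}K(k)=\sum_n \binom{2n}{n}^2 2^{-4n}k^{2n}$ together with $\binom{2n}{n}2^{-2n}=\tfrac{2}{\pi}\int_0^{\pi/2}\sin^{2n}\theta\,\d\theta$, one obtains a representation of $g$ by a single $K$ under a double angular integral,
\[
g(x)=\frac{8}{\pi^3}\int_0^{\pi/2}\!\!\int_0^{\pi/2}K(x\sin\alpha\sin\beta)\,\d\alpha\,\d\beta,
\]
which makes manifest that only $K$ (and not $K'$) occurs.

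To finish I would evaluate $A+B$ by isolating its two constituents. The $\zeta(3)$ part is produced by Wan's moment $\frac{4}{\pi^2}\int_0^1 kK(k)^2\,\d k=\frac{7\zeta(3)}{\pi^2}$, equivalently $\frac{14\zeta(3)}{\pi^2}=2\int_0^1 k\,(\tfrac{2}{\pi}K)^2\,\d k$. In fact, by \eqref{WZ crucial} the double sum $2\sum_{n}(2n+1)^{-2}\sum_{k=0}^n(4k+1)b(k)$ equals $\frac{14\zeta(3)}{\pi^2}+B$, which is precisely \eqref{elliptical integral double sum 2}; hence the target is equivalent to $A+2\sum_n(2n+1)^{-2}\sum_{k=0}^n(4k+1)b(k)=4\log 2$. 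The remaining, purely elementary, piece is then extracted by exchanging the order of integration in the representation of $g$ and reducing to one-dimensional integrals of $K$ against elementary kernels, whose logarithmic boundary contributions should assemble into $4\log 2$.

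The main obstacle is exactly this last evaluation: showing that, once the $\frac{14\zeta(3)}{\pi^2}$ coming from the $K^2$-moment has been separated off, the residual elliptic-integral integral collapses to the single value $4\log 2$ — equivalently, that the contributions which would otherwise produce an $L(f,4)$ term cancel identically. This demands careful handling of conditionally convergent rearrangements, since $\sum_k(4k+1)b(k)$ diverges and so the Abel summation relating $A$ to the double sum must be carried out with explicit tails; I expect a dedicated WZ certificate, in the spirit of Lemma~\ref{L:WZ}, to be the cleanest device for telescoping the double sum down to the $\log 2$ term. Everything else reduces to routine termwise integration and appeals to the formulas already established.
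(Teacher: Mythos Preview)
Your partial-fraction reduction to $A+B=4\log 2-\frac{14\zeta(3)}{\pi^2}$ is correct and is exactly the identity the paper ends up proving. But from that point on the proposal does not actually establish anything. The detour through \eqref{elliptical integral double sum 2} is circular: substituting $\frac{14\zeta(3)}{\pi^2}+B=2\sum_n(2n+1)^{-2}\sum_{k\le n}(4k+1)b(k)$ into the target simply replaces one unproved identity by an equivalent one, $A+2\sum_n(2n+1)^{-2}\sum_{k\le n}(4k+1)b(k)=4\log 2$, which is no easier. You then say you ``expect a dedicated WZ certificate'' to telescope this to $4\log 2$; that is the entire content of the theorem and you have not supplied it. Your integral representation of $g(x)$ is correct but is never used to compute anything; and your claim that ``no $K'$ will enter'' is illusory, since by the density formula \eqref{density integral} your double angular integral is precisely $\frac{8}{\pi^3}\int_0^1 K(xk)K'(k)\,\d k$.

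The paper's argument is quite different and avoids the impasse entirely. It introduces the auxiliary Mahler measure $R(\alpha)=\m\bigl(\alpha(u+u^{-1})(z+z^{-1})+(x+x^{-1})(y+y^{-1})\bigr)$ and evaluates $R(1)$ in two independent ways. On one hand $R(1)=\frac{4}{\pi^2}\sum_{n\ge 0}(2n+1)^{-3}=\frac{7\zeta(3)}{2\pi^2}$ directly from a trilogarithm expansion. On the other hand, writing $R(1)=\frac{4}{\pi^2}\int_0^1 m(4k)K'(k)\,\d k$ and passing to the angular variable $k=\sin\theta$, one substitutes the Fourier expansions \eqref{Kcos Fourier} and \eqref{m Fourier}; orthogonality of $\cos(2j\theta)$ on $[0,\pi/2]$ then collapses the integral to $\log 2-\tfrac14 A-\tfrac14 B$. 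Equating the two values of $R(1)$ gives the result in one line. The missing idea in your proposal is precisely this Fourier-orthogonality mechanism (or any concrete substitute for it) that turns the integral into a closed form.
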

We need several auxiliary results to prove this theorem.  First, define the following Mahler measures:
\begin{align}
m(\alpha)&:= \m\bigl(\alpha+x+x^{-1}+y+y^{-1}\bigr),\\
R(\alpha)&:= \m\bigl(\alpha(u+u^{-1})(z+z^{-1})+(x+x^{-1})(y+y^{-1})\bigr).\label{R def}
\end{align}
Our strategy is to reduce $R(\alpha)$ to trilogarithms immediately.  Then we prove an integral representation for $R(\alpha)$ involving $m(\alpha)$ and an elliptical integral. Substituting Fourier series expansions then leads to
an expression for $R(1)$ in terms of hypergeometric functions.  We complete the proof of \eqref{zeta3 formula} by comparing the two different formulas for $R(1)$.

\begin{lemma} Suppose that $0\le \alpha\le1$.  The following formula is true:
\begin{align}
R(\alpha)=&\frac{4}{\pi^2}\sum_{n=0}^{\infty}\frac{\alpha^{2n+1}}{(2n+1)^3}.\label{R as polylog}
\end{align}
\end{lemma}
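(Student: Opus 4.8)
The plan is to prove the formula by differentiating $R(\alpha)$ in $\alpha$ and matching the result against the termwise derivative of the target series. The key structural observation is that the variables $x,y$ enter only through $(x+x^{-1})(y+y^{-1})$ while $\alpha$ multiplies the independent product $(u+u^{-1})(z+z^{-1})$. So I would first collapse the pair $x,y$ by Jensen's formula, then differentiate in $\alpha$, and finally reduce the remaining average over $u,z$ to a complete elliptic integral. Throughout write $a=u+u^{-1}$, $b=z+z^{-1}$, $c=x+x^{-1}$, $d=y+y^{-1}$, each equal to $2\cos$ of the corresponding angle on the torus, so that the integrand is $\log|\alpha ab + cd|$.

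Concretely, the first step integrates over the $x$-variable using the standard evaluation $\int_0^1 \log|A+2d\cos 2\pi\theta_x|\,\d\theta_x = \log\frac{|A|+\sqrt{A^2-4d^2}}{2}$ when $|A|\ge 2|d|$ and $=\log|d|$ otherwise, applied with $A=\alpha ab$. This produces an explicit absolutely integrable function $G(\alpha|ab|,d)$, whose $\alpha$-derivative is $\partial_\alpha G = |ab|\,(\alpha^2a^2b^2-4d^2)^{-1/2}$ on the region $\alpha|ab|>2|d|$ and $0$ elsewhere. Integrating this over the $y$-variable is a routine elliptic substitution giving $\int_0^1 \partial_\alpha G\,\d\theta_y = \frac{|ab|}{2\pi}K\!\bigl(\tfrac{\alpha|ab|}{4}\bigr)$. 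Writing $r=|ab|/4=|\cos 2\pi\theta_u\cos 2\pi\theta_z|$, this yields the clean intermediate identity $R'(\alpha)=\frac{2}{\pi}\int_0^1\!\int_0^1 r\,K(\alpha r)\,\d\theta_u\,\d\theta_z$.

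The final step inserts the classical series $K(k)=\frac{\pi}{2}\sum_{n\ge 0}2^{-4n}\binom{2n}{n}^2 k^{2n}$ and integrates term by term. The separated moments $\int_0^1 |\cos 2\pi\theta|^{2n+1}\,\d\theta = \frac{2}{\pi}\cdot\frac{4^n}{(2n+1)\binom{2n}{n}}$ contribute a factor whose squared binomial exactly cancels the $\binom{2n}{n}^2$ from $K$, while the powers of $2$ and $4$ also collapse; what survives is $R'(\alpha)=\frac{4}{\pi^2}\sum_{n\ge 0}\frac{\alpha^{2n}}{(2n+1)^2}$, which is precisely the $\alpha$-derivative of the claimed series. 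Since $R(0)=\m\bigl((x+x^{-1})(y+y^{-1})\bigr)=\m(x+x^{-1})+\m(y+y^{-1})=0$, integrating from $0$ to $\alpha$ gives the stated formula.

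The main obstacle is analytic rather than algebraic: the integrand $\log|\alpha ab + cd|$ must be differentiated across the locus where $\alpha(u+u^{-1})(z+z^{-1})+(x+x^{-1})(y+y^{-1})$ vanishes, where a naive $\partial_\alpha$ produces a non-integrable singularity. Performing the Jensen integration in the $x$-variable \emph{first} is exactly what tames this, since the resulting $\partial_\alpha G$ has only an integrable inverse-square-root singularity along $\alpha|ab|=2|d|$, so dominated convergence justifies differentiation under the remaining integral sign for $\alpha\in(0,1)$. I would also note that the termwise integration of the $K$-series is legitimate by absolute convergence for $r\le 1$ and $\alpha<1$, and handle the endpoint $\alpha=1$ by continuity, using convergence of $\sum (2n+1)^{-3}$.
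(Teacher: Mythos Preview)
Your argument is correct. The Jensen evaluation in $x$, the computation of $\partial_\alpha G$, the reduction of the $y$-integral to $\frac{|ab|}{2\pi}K(\alpha|ab|/4)$, the termwise integration against the cosine moments, and the use of $R(0)=0$ all check out, and your remarks about the integrable $-1/2$ singularity and the endpoint $\alpha=1$ are the right justifications.

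The paper's proof is organized differently and is somewhat shorter. Rather than splitting the $x$- and $y$-integrations and passing through $R'(\alpha)$, the paper collapses the pair $(x,y)$ in one step: writing $(x+x^{-1})(y+y^{-1})=X+X^{-1}+Y+Y^{-1}$ via $X=xy$, $Y=xy^{-1}$, the inner $(x,y)$-integral is recognized directly as the two-variable Mahler measure $m(4\alpha r)$ with $r=|\cos 2\pi t\cos 2\pi s|$. Then the known $_3F_2$ expansion $m(4\beta)=4\sum_{n\ge0}\binom{2n}{n}^2\frac{(\beta/4)^{2n+1}}{2n+1}$ is inserted and integrated termwise over $s,t$ using the same cosine moment $\int_0^1|\cos 2\pi t|^{2n+1}\,\d t=\frac{2^{2n+1}}{\pi(2n+1)\binom{2n}{n}}$ that you use. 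In effect your differentiate-then-integrate manoeuvre rederives that $_3F_2$ expansion from scratch (the $y$-integral producing $K(\alpha r)$ is precisely $\frac{\d}{\d\alpha}m(4\alpha r)=\frac{2r}{\pi}K(\alpha r)$), so your route is more self-contained but longer, while the paper's is more economical by quoting the series for $m$.
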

\begin{proof}
Setting $u=e^{2\pi i t}$ and $z=e^{2\pi i s}$ in the definition of $R(\alpha)$ above,  with a little work we find
\begin{equation}\label{R intermediate}
R(\alpha)=\int_{0}^{1}\int_{0}^{1}m\left(4\alpha|\cos(2\pi t)\cos(2\pi s)|\right)\, \d s\d t.
\end{equation}
Now we require the $_3F_2$ series expansion for $m(\alpha)$. If $\alpha\in[0,1]$, we have
\begin{equation*}
m(4\alpha)=4\sum_{n=0}^{\infty} \binom{2n}{n}^2\frac{\left(\alpha/4\right)^{2n+1}}{2n+1}.
\end{equation*}
Therefore, $R(\alpha)$ becomes
\begin{align*}
R(\alpha)&=4\sum_{n=0}^{\infty} \binom{2n}{n}^2 \frac{\left(\alpha/4\right)^{2n+1}}{2n+1} \int_{0}^{1}\int_{0}^{1}|\cos(2\pi s)\cos(2\pi t)|^{2n+1}\,\d t\d s\\
&=\frac{4}{\pi^2}\sum_{n=0}^{\infty}\frac{\alpha^{2n+1}}{(2n+1)^3},
\end{align*}
where the final step uses the formula
\begin{equation*}
\int_{0}^{1}|\cos(2\pi t)|^{2n+1}\d t=\frac{2^{2n+1}}{\pi (2n+1)\binom{2n}{n}}.
\end{equation*}
\end{proof}

\begin{lemma}Suppose that $F(k)$ is integrable for $k\in[0,1]$.  Then
\begin{equation}\label{density integral}
\int_{0}^{1}\int_{0}^{1}F\left(\left|\cos(2\pi t)\cos(2\pi s)\right|\right)\d s\d t=\frac{4}{\pi^2}\int_{0}^{1}F(k) K'(k)\,\d k.
\end{equation}
Thus for any $\alpha\in\mathbb{C}$:
\begin{align}
R(\alpha)=&\frac{4}{\pi^2}\int_{0}^{1}m\left(4\alpha k\right)K'(k)\,\d k.\label{R as integral}
\end{align}
\end{lemma}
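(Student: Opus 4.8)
The plan is to read the left-hand side of \eqref{density integral} probabilistically. With $t,s$ independent and uniform on $[0,1]$, set $X=|\cos(2\pi t)|$ and $Y=|\cos(2\pi s)|$; then $X$ and $Y$ are independent and the double integral is the expectation $\EE\bigl[F(XY)\bigr]$. Thus it suffices to determine the distribution of the product $XY$ on $[0,1]$ and to show that its density equals $\tfrac{4}{\pi^2}K'(k)$. Granting this, \eqref{density integral} is immediate, since $\EE\bigl[F(XY)\bigr]=\int_0^1 F(k)\,f_{XY}(k)\,\d k$.

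First I would record the common density of $X$ and $Y$. Because $\cos(2\pi t)$ follows the arcsine law, a one-line change of variables shows that $X=|\cos(2\pi t)|$ has density $p(x)=\tfrac{2}{\pi\sqrt{1-x^2}}$ on $(0,1)$ (and one checks $\int_0^1 p=1$). The density of the product $K=XY$ is then given by the multiplicative convolution
\[
f_{XY}(k)=\int_{k}^{1} p(x)\,p\!\left(\frac{k}{x}\right)\frac{\d x}{x}
=\frac{4}{\pi^2}\int_{k}^{1}\frac{\d x}{\sqrt{(1-x^2)(x^2-k^2)}},
\]
where the lower limit $x=k$ comes from the constraint $k/x\le 1$, and the integrand simplifies after writing $\sqrt{1-k^2/x^2}=\sqrt{x^2-k^2}/x$.

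The crux is to evaluate this last integral as $K'(k)$, and I expect this elliptic-integral computation to be the only genuinely nontrivial step. I would substitute $x^2=1-(1-k^2)s^2$, under which $x=1,k$ correspond to $s=0,1$ and which turns $1-x^2=(1-k^2)s^2$ and $x^2-k^2=(1-k^2)(1-s^2)$. After cancellation the integral collapses to
\[
\int_{0}^{1}\frac{\d s}{\sqrt{(1-s^2)\bigl(1-(1-k^2)s^2\bigr)}}
=\frac{\pi}{2}\,\pFq{2}{1}{\frac12,\frac12}{1}{1-k^2}=K'(k),
\]
using the definition of $K'$. This identifies $f_{XY}(k)=\tfrac{4}{\pi^2}K'(k)$ and establishes \eqref{density integral}.

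Finally, \eqref{R as integral} would follow by applying \eqref{density integral} to the function $k\mapsto m(4\alpha k)$. For each fixed $\alpha\in\CC$ the Mahler measure $m(4\alpha k)$ is a continuous, real-valued function of $k$, hence integrable on $[0,1]$, so \eqref{density integral} applies; combining it with the representation \eqref{R intermediate} yields $R(\alpha)=\tfrac{4}{\pi^2}\int_{0}^{1} m(4\alpha k)K'(k)\,\d k$, as claimed.
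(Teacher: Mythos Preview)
Your proof is correct and follows essentially the same approach as the paper: the probabilistic framing in terms of the density of $XY$ is just a repackaging of the paper's change of variables $(t,s)\mapsto(u,v)=(|\cos 2\pi t|,|\cos 2\pi s|)$ followed by $k=uv$, and your substitution $x^2=1-(1-k^2)s^2$ is exactly the paper's $v\mapsto\sqrt{1-(1-k^2)t^2}$ used to identify the inner integral with $K'(k)$. The deduction of \eqref{R as integral} from \eqref{R intermediate} is likewise identical.
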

\begin{proof}
To prove \eqref{density integral}, notice that by an elementary change of variables:
\begin{align*}
\int_{0}^{1}\int_{0}^{1}F\left(\left|\cos(2\pi t)\cos(2\pi s)\right|\right)\,\d s\d t&=\frac{4}{\pi^2}\int_{0}^{1}\int_{0}^{1}F\left(u v\right)\frac{\d u \d v}{\sqrt{(1-u^2)(1-v^2)}}\\
&=\frac{4}{\pi^2}\int_{0}^{1}F(k)\left(\int_{k}^{1}\frac{\d v}{v\sqrt{(1-\frac{k^2}{v^2})(1-v^2)}}\right)\,\d k.
\end{align*}
The second step follows from setting $k= u v$ and eliminating $u$.  We then use the change of variables $v\mapsto\sqrt{1-(1-k^2)t^2}$ to identify the nested integral as $K'(k)$. Finally, formula \eqref{R as integral} follows from applying \eqref{density integral} to \eqref{R intermediate}.
\end{proof}

\begin{lemma}  We have the following Fourier series expansions:
\begin{align}
K(\sin(\theta))\cos(\theta)&=\frac{\pi}{2}\sum_{n=0}^{\infty}\frac{1}{2^{4n}} \binom{2n}{n}^2 \left(\sin(4n\theta)+\sin((4n+2)\theta)\right),\label{Ksin Fourier}\\
K(\cos(\theta))\cos(\theta)&=\frac{\pi}{2}\sum_{n=0}^{\infty}\frac{1}{2^{4n}}  \binom{2n}{n}^2 \left(\cos(4n\theta)+\cos((4n+2)\theta)\right),\label{Kcos Fourier}\\
m\left(4\sin(\theta)\right)&=\log2-\sum_{n=1}^{\infty}\frac{1}{2^{4n}} \binom{2n}{n}^2 \frac{\cos(4n\theta)}{4n}-\sum_{n=0}^{\infty}\frac{1}{2^{4n}} \binom{2n}{n}^2 \frac{\cos((4n+2)\theta)}{4n+2}.\label{m Fourier}
\end{align}
\end{lemma}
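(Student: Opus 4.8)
The plan is to treat the two elliptic-integral expansions \eqref{Ksin Fourier} and \eqref{Kcos Fourier} simultaneously, and to obtain the Mahler-measure expansion \eqref{m Fourier} from the first by a single integration. Everything rests on the power series $\frac{2}{\pi}K(k)=\sum_{n\ge0}c_n k^{2n}$ with $c_n:=\binom{2n}{n}^2/2^{4n}$, which is already recorded in the paper, together with the elementary asymptotic $c_n\sim 1/(\pi n)$.

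First I would collapse the paired trigonometric terms by the sum-to-product formulas
\[\sin(4n\theta)+\sin((4n+2)\theta)=2\cos\theta\,\sin((4n+1)\theta),\qquad \cos(4n\theta)+\cos((4n+2)\theta)=2\cos\theta\,\cos((4n+1)\theta).\]
After cancelling the common factor $\cos\theta$, both \eqref{Ksin Fourier} and \eqref{Kcos Fourier} become, for $\theta\in(0,\pi/2)$, precisely the imaginary and real parts of the single complex identity
\[\sum_{n=0}^{\infty}c_n\,e^{i(4n+1)\theta}=\frac{2}{\pi}\,e^{i\theta}K(e^{2i\theta})=\frac{1}{\pi}\bigl(K(\cos\theta)+iK(\sin\theta)\bigr).\]
The middle equality is merely $\sum_n c_n e^{i(4n+1)\theta}=e^{i\theta}\sum_n c_n(e^{2i\theta})^{2n}$; since $c_n\sim 1/(\pi n)$, this series converges conditionally (Abel/Dirichlet) whenever $e^{4i\theta}\neq1$, i.e. on the open interval, and represents the boundary value of $K$ on the unit circle. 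This reduction is clean, so the whole weight of \eqref{Ksin Fourier} and \eqref{Kcos Fourier} is shifted onto the right-hand equality.

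The crux is therefore to evaluate $K$ at the unimodular modulus $e^{2i\theta}$, and I would do this from two classical transformations. The descending Landen (quadratic) transformation $K(k)=\frac{1}{1+k}K\!\bigl(\tfrac{2\sqrt k}{1+k}\bigr)$ with $k=e^{2i\theta}$ simplifies, using $\tfrac{2\sqrt{e^{2i\theta}}}{1+e^{2i\theta}}=\sec\theta$ and $\tfrac{1}{1+e^{2i\theta}}=\tfrac{e^{-i\theta}}{2\cos\theta}$, to $e^{i\theta}K(e^{2i\theta})=\frac{1}{2\cos\theta}K(\sec\theta)$; then the reciprocal-modulus transformation together with $K'(\cos\theta)=K(\sin\theta)$ yields $K(\sec\theta)=\cos\theta\bigl(K(\cos\theta)+iK(\sin\theta)\bigr)$, and the two combine to the asserted identity. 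The hard part will be the bookkeeping of branches: one must verify that the principal branches of the square roots and of $K$ produce exactly the sign $+i$ (rather than $-i$) for $\theta\in(0,\pi/2)$, and that the endpoint blow-up $K(1)=\infty$ at $\theta=0,\pi/2$ is compatible with the conditional convergence of the series (this is also why \eqref{Ksin Fourier}, whose right side is an \emph{odd} function, can only hold on the open interval).

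Finally I would deduce \eqref{m Fourier} from \eqref{Ksin Fourier}. Differentiating $m(4\sin\theta)=\sum_{n\ge0}c_n\frac{\sin^{2n+1}\theta}{2n+1}$ term by term gives $\frac{d}{d\theta}m(4\sin\theta)=\cos\theta\sum_n c_n\sin^{2n}\theta=\frac{2}{\pi}K(\sin\theta)\cos\theta$, which by \eqref{Ksin Fourier} equals $\sum_n c_n\bigl(\sin(4n\theta)+\sin((4n+2)\theta)\bigr)$; integrating term by term reproduces the two cosine series of \eqref{m Fourier} up to an additive constant. That constant is the mean value $\frac{1}{\pi}\int_0^{\pi}m(4\sin\theta)\,d\theta=\frac{2}{\pi}\int_0^{\pi/2}m(4\sin\theta)\,d\theta$, and expanding and using $\int_0^{\pi/2}\sin^{2n+1}\theta\,d\theta=4^n/\bigl((2n+1)\binom{2n}{n}\bigr)$ turns this into
\[\frac{2}{\pi}\sum_{n=0}^{\infty}\frac{\binom{2n}{n}}{4^n(2n+1)^2}=\frac{2}{\pi}\int_0^1\frac{\arcsin t}{t}\,dt=\log 2,\]
where the last step is the standard evaluation $\int_0^1 t^{-1}\arcsin t\,dt=\frac{\pi}{2}\log2$. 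This pins the constant and finishes the proof; as a consistency check one may note $m(0)=\m(x+x^{-1}+y+y^{-1})=0$, which follows from the factorization $x+x^{-1}+y+y^{-1}=(x+y)(1+x^{-1}y^{-1})$ and two applications of Jensen's formula.
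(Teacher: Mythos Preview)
Your proof is correct and is more self-contained than the paper's. For \eqref{Ksin Fourier} and \eqref{Kcos Fourier} the paper simply invokes results of Wan, whereas you derive them directly by packaging both into the single complex identity $2e^{i\theta}K(e^{2i\theta})=K(\cos\theta)+iK(\sin\theta)$ and proving that via the quadratic (Landen) transformation followed by the reciprocal-modulus transformation of $K$; your route is explicit and elementary but, as you rightly flag, the branch bookkeeping in the reciprocal-modulus step (fixing the sign $+i$ rather than $-i$) and the Abel-summation passage to the boundary $|k|=1$ are the genuine analytic content and would need to be written out carefully. For \eqref{m Fourier} the two arguments coincide in spirit: the paper writes $m(4\sin\theta)=\frac{2}{\pi}\int_0^\theta K(\sin u)\cos u\,\d u$ and integrates \eqref{Ksin Fourier} termwise from $0$, leaving the identification of the resulting constant $\sum_{n\ge1}\frac{c_n}{4n}+\sum_{n\ge0}\frac{c_n}{4n+2}$ with $\log 2$ implicit; you instead determine the constant as the Fourier mean $\frac{2}{\pi}\int_0^{\pi/2}m(4\sin\theta)\,\d\theta$ and evaluate it via $\int_0^1 t^{-1}\arcsin t\,\d t=\frac{\pi}{2}\log 2$, which is a tidier and fully justified way to pin it down.
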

\begin{proof} Formulas \eqref{Ksin Fourier} and \eqref{Kcos Fourier} follow immediately from results in \cite{Wan}.  The $_3F_2$ series expansion for $m(\alpha)$ is equivalent to
\begin{equation*}
m(4\sin(\theta))=\frac{2}{\pi}\int_{0}^{\sin(\theta)}K(k)\,\d k
=\frac{2}{\pi}\int_{0}^{\theta}K(\sin(u))\cos(u)\,\d u.
\end{equation*}
We then recover \eqref{m Fourier} by integrating \eqref{Ksin Fourier}.
\end{proof}

\begin{proof}[Proof of Theorem \ref{zeta3 theorem}]  Equation \eqref{R as polylog} immediately gives
\begin{equation*}
R(1)=\frac{7\zeta(3)}{2\pi^2}.
\end{equation*}
On the other hand, equation \eqref{R as integral} leads to
\begin{align*}
R(1)&=\frac{4}{\pi^2}\int_{0}^{\pi/2}m(4\sin(\theta))K'(\sin(\theta))\cos(\theta)\,\d\theta\\
&=\log2-\frac{1}{4}\sum_{n=1}^{\infty}\frac{1}{2n} \binom{2n}{n}^{4} \frac{1}{2^{8n}}-\frac{1}{4}\sum_{n=0}^{\infty}\frac{1}{2n+1} \binom{2n}{n}^4\frac{1}{2^{8n}}.
\end{align*}
The final step follows from substituting \eqref{m Fourier} and \eqref{Kcos Fourier} into the integral.  Comparing the values of $R(1)$ concludes the proof.
\end{proof}

\begin{proof}[Proof of Theorem~\ref{T:main}]
Combining formulas \eqref{corollary formula}, \eqref{elliptical integral double sum}, and \eqref{elliptical integral double sum 2} leads to the following identity:
\begin{equation}\label{result 1}
\frac{192}{\pi^4}L(f,4)-\frac{7\zeta(3)}{\pi^2}=\sum_{n=0}^{\infty}\frac{1}{2n+1} \binom{2n}{n}^4 \frac{1}{2^{8n}}.
\end{equation}
By Theorem~\ref{T:hypergeom}, we have
\begin{align*}
\m\left(R_{16}\right)&=\Re\left(\log(16)-\frac{1}{32}\pFq{6}{5}{\frac{3}{2},\frac{3}{2},\frac{3}{2},\frac{3}{2},1,1}{2,2,2,2,2}{1}\right)\\
&=4\log(2)-\sum_{n=1}^{\infty}\frac{1}{2n} \binom{2n}{n}^4\frac{1}{2^{8n}}\\
&=4\log(2)-\sum_{n=1}^{\infty}\frac{4n+1}{(2n)(2n+1)} \binom{2n}{n}^4 \frac{1}{2^{8n}}+\sum_{n=1}^{\infty}\frac{1}{2n+1}  \binom{2n}{n}^4\frac{1}{2^{8n}}.
\end{align*}
Finally, we obtain \eqref{E:main} using \eqref{zeta3 formula} and \eqref{result 1}.
\end{proof}

\section{Concluding remarks}\label{Sec:conclude}
With Theorem~\ref{T:main} in mind it is natural to relate $\m(R_k)$ to special values of $L$-functions for other values of $k$, though this remains a challenge.  Through work of Deninger~\cite{Deninger}, we expect the Mahler measure to encode information about the $L$-series of the zero locus of the polynomial.  If we define a family $H_t$ of hypersurfaces in four-dimensional affine space by
\[
  H_t : (x^2+1)(y^2+1)(z^2+1)(w^2+1) - 16txyzw = 0,
\]
then it is shown in \cite{FPR} that for any odd prime $p$ we can find a formula for the number of points on $H_t$ over $\mathbb{F}_p$ in terms of finite field hypergeometric functions.  In particular,
\begin{multline} \label{FPR}
  \# H_t(\mathbb{F}_p) = p^3{}_4F_3(t^2) + 4\phi(-1)p^2{}_2F_1(t^2) - 3\varepsilon(t^2-1)p^2 \\
  {}+ p^3 + 8(\phi(-1)+1)p^2 - 16(\phi(-1)+1)p - 3p + 8(\phi(-1)+1) + 1,
\end{multline}
where if $\phi$ is the Legendre symbol modulo $p$ and $\varepsilon$ is the trivial character modulo $p$ (taking value $0$ at $0$), then ${}_4F_3(x) = {}_4F_3(\phi,\phi,\phi,\phi; \varepsilon,\varepsilon, \varepsilon; x)$ and ${}_2F_1(x) = {}_2F_1(\phi,\phi; \varepsilon; x)$ are the finite field hypergeometric functions originally defined by Greene~\cite{Greene}.  Ahlgren and Ono showed in \cite{AO1,AO2} that
\[
  p^3{}_4F_3(1) = -a_p - p,
\]
where $f = \sum_{n=1}^\infty a_n q^n$ is the Fourier expansion of the modular form $f = \eta^4(2\tau)\eta^4(4\tau) \in S_4(\Gamma_0(8))$ from Theorem~\ref{T:main}.  Thus the $L$-function of $f$ appears in the $L$-function of $H_1$, which led us to hypothesize that $\m(R_{16})$ should be related to $L(f,4)$ in the first place ($t=1 \leftrightarrow k=16$).

For other values of $k$, work of McCarthy and the first author \cite{MP} shows that ${}_4F_3(-1)$ can be expressed in terms of eigenvalues of a Siegel eigenform of degree $2$, whose $L$-function is a tensor product $L$-function $L(f_2 \otimes f_3,s)$, for classical newforms $f_2 \in S_2(\Gamma_0(32))$ and $f_3 \in S_3(\Gamma_0(32),\chi_{-4})$ (see \cite{vGvS}).  Comparing with \eqref{FPR} this led us to investigate relations between
\[
  \m \bigl(R_{16\sqrt{-1}}\bigr) \longleftrightarrow L(f_2 \otimes f_3,4),
\]
but after several attempts by the authors to search for such a relationship, the question of finding one remains open.

On the other hand, let us consider the four-variable Laurent polynomial $x+x^{-1}+y+y^{-1}+z+z^{-1}+w+w^{-1}$, studied in \cite{AO1,AO2,VE}. By an elementary change of variables, we have
\begin{equation}\label{degenerate}
\m(x+x^{-1}+y+y^{-1}+z+z^{-1}+w+w^{-1})=R(1)=\frac{7\zeta(3)}{2\pi^2}.
\end{equation}
It would be quite interesting to explain why $\zeta(3)/\pi^2$ appears on the right, rather than a special value of the $L$-function attached to the three-fold, which also coincides with $L(f,s)$.  From examples involving elliptic curves, it seems plausible that there are some arithmetic conditions which the polynomial fails to satisfy, which are necessary to produce formulas such as \eqref{E:main}.  Notice that $\zeta(3)/\pi^2$ is essentially an analogue of $G/\pi$, which appears in connection with the genus zero curve in equation \eqref{E:2var}.  In summary, it would be interesting to fully explain \eqref{degenerate}, as it might help to predict what $L$-values should appear in additional Mahler measure formulas.

Other possible projects would be to study the Mahler measures of the following families:
\begin{align*}
S_k:&=x+x^{-1}+y+y^{-1}+z+z^{-1}+w+w^{-1}-k,\\
T_k:&=x^5+y^5+z^5+w^5+1-kxyzw.
\end{align*}
The latter can be viewed as a four-dimensional analogue of the Hesse family of elliptic curves $x^3+y^3+1-kxy$ and the family $x^4+y^4+z^4+1-kxyz$ of $K3$ surfaces, whose Mahler measures are known to be related to special $L$-values \cite{RV,Rogers,Samart2,Samart1}.

\textbf{Acknowledgements.}  The authors thank Wadim Zudilin for the useful suggestions which improved the exposition of the paper, and also for bringing Verrill's paper to our attention.

\bibliographystyle{amsplain}

\end{document}